\documentclass[11pt,a4paper]{article}
\usepackage[T1]{fontenc}
\usepackage[utf8]{inputenc}
\usepackage{lmodern}
\usepackage[margin=27mm]{geometry}
\usepackage{amsmath,amssymb,amsthm,mathtools,mathrsfs}
\usepackage{microtype}
\usepackage{enumitem}
\setlist{itemsep=3pt,topsep=5pt,parsep=0pt,leftmargin=*}
\usepackage{xcolor}
\definecolor{linkblue}{RGB}{30,65,90}
\usepackage{hyperref}
\hypersetup{colorlinks=true,linkcolor=linkblue,citecolor=linkblue,urlcolor=linkblue,
 pdftitle={Solution of the Mumford-Shah conjecture},
 pdfauthor={Francesco Deangelis}}
\usepackage{bookmark}
\newtheorem{theorem}{Theorem}[section]
\newtheorem{lemma}[theorem]{Lemma}
\newtheorem{proposition}[theorem]{Proposition}

\theoremstyle{definition}

\theoremstyle{remark}

\newcommand{\R}{\mathbb R}
\newcommand{\C}{\mathbb C}

\newcommand{\dd}{\mathop{}\!\mathrm d}
\newcommand{\Rot}{\mathsf J}

\numberwithin{equation}{section}
\allowdisplaybreaks[1]
\title{Solution of the Mumford--Shah conjecture}
\author{Francesco Deangelis\thanks{Applied Mathematics M\"unster, University of M\"unster, Einsteinstrasse 62, 48149 M\"unster, Germany. Email: \href{mailto:francesco.deangelis@uni-muenster.de}{\texttt{francesco.deangelis@uni-muenster.de}}.}}
\date{22 September 2026}
\begin{document}
\maketitle
\begingroup
\small
\section*{Author's note}

On September 9, 2026, I obtained a proof of the Mumford--Shah conjecture,
formulated in 1989~\cite{MS89}, using ChatGPT Astra. A shorter version of the
manuscript was produced on September 11.

I had been checking the argument and had planned to simplify it and improve
its exposition before making it public. However, today, September 22, I saw
OpenAI's announcement of September 21, reporting that an internal model had
solved more than 100 long-standing open problems in mathematics. The
announcement does not provide a complete list of these problems, and I do
not know whether the Mumford--Shah conjecture is among them.
This prompted me to make the current AI-generated draft publicly available
now, to document the chronology of this work. I will continue checking the
mathematical details, improving the exposition, and simplifying the proof.
I believe a more elementary proof is possible and I am working on it.

\subsection*{How the manuscript was developed}

I began working on regularity for the Mumford--Shah functional during my
PhD, focusing on a variant with Dirichlet boundary conditions. My earlier
results are presented in \cite{Dea24}.

In April 2026, I made further progress on this boundary problem. That work
has not yet been made public.

In September 2026, I decided to use ChatGPT Pro to investigate both the
conjecture arising from my PhD research and the main Mumford--Shah
conjecture. I uploaded my PhD thesis, some books on the subject, and a
LaTeX file containing my recent unpublished work.

On September 5, 2026, I gave Astra a detailed prompt in Work mode,
specifying a research protocol and asking it to solve the Mumford--Shah
conjecture. After six hours, the model had not produced a complete proof.

On September 6, I asked Astra to address the conjecture formulated in my
PhD thesis, concerning the Mumford--Shah functional with Dirichlet boundary
conditions. After one hour and twenty minutes, it produced a proof building
on my recent progress.

On September 9, I returned to the conversation I had started on September 5
and provided Astra with the LaTeX manuscript of the new boundary regularity
result. I asked it to use this result to prove the main conjecture.
The model initially suggested several possible approaches. I then asked it
to pursue those approaches or develop other original ideas, allowing it to
work for up to six hours. After two hours and thirty minutes, it produced a
complete proof of the Mumford--Shah conjecture.

The resulting manuscript was 37 pages long and included proofs of several
well-known results. After further exchanges, I asked Astra to produce a
shorter version. The manuscript shared here is the version obtained on
September 11.

The strategy of the proof is to classify generalized global minimizers
through a second-variation argument that combines inner and outer
variations.

\par\endgroup
\clearpage
\begin{abstract}
We present a compact-translation argument for the classification of planar
Mumford--Shah generalized global minimizers. Two perpendicular translations
of a compact isolated portion of the crack, independently relaxed in the
displacement, saturate a full-plane Hodge identity. The resulting normal
conditions imply a holomorphic rigidity statement which excludes that
portion. Established classification theorems then leave only the constant,
pure-jump, triple-junction, and crack-tip models. The planar interior
regularity conjecture follows from the known equivalence between this
classification and local regularity. No homogeneity assumption is imposed
on the generalized global minimizer.
\end{abstract}

\section{Introduction and main result}\label{sec:introduction}

The planar Mumford--Shah conjecture concerns the local geometry of a
minimizing discontinuity set: each point should lie on a regular arc, be
an endpoint, or be a junction of three arcs meeting at equal angles.
The regularity theory reduces this question to the classification of
generalized global minimizers; see De Lellis--Focardi
\cite[Theorem~1.4.5 and the following paragraph]{DLF}.
This reduction includes regularity at crack tips, and does not require
blow-ups to be homogeneous.

Our argument addresses the remaining global classification question.
If the crack disconnects the plane, the David--L\'eger theorem already
classifies it. If the complement is connected, we exclude a nonempty
compact portion which is both open and closed relative to the crack.
The new step combines relaxed second variations for two translations
with the Hodge decomposition on the plane. A holomorphic decomposition
then turns equality in those variations into a contradiction.
A topological observation and the Bonnet--David classification complete
the proof. We use the existing compactness and regularity theory by
reference throughout.

\subsection*{The variational problem}
Let $\Omega\subset\R^2$ be bounded and open, let $g\in L^\infty(\Omega;\R)$, and let $\lambda\geq0$. A pair $(K,u)$ is \emph{admissible} if $K\subset\Omega$ is relatively closed, $\mathcal H^1(K)<\infty$, and $u\in W^{1,2}(\Omega\setminus K;\R)$. Its energy is
\begin{equation}\label{intro:energy}
 \mathcal E_\lambda(K,u)
 =\int_{\Omega\setminus K}\bigl(|\nabla u|^2+\lambda|u-g|^2\bigr)\,\dd x
        +\mathcal H^1(K).
\end{equation}
We call $(K,u)$ a \emph{compact-perturbation minimizer} if
\[
 \mathcal E_\lambda(K,u)\leq\mathcal E_\lambda(L,v)
\]
for every admissible $(L,v)$ for which some compact $C\subset\Omega$ satisfies
\begin{equation}\label{intro:comparison}
 L\setminus C=K\setminus C,\qquad
 v=u\quad\text{a.e. on }\Omega\setminus(C\cup K\cup L).
\end{equation}
There is no restriction on the topology of competitors in this definition. We call $K$ \emph{reduced} if
\begin{equation}\label{intro:reduced}
 \mathcal H^1(K\cap B_r(x))>0
 \quad\text{whenever }x\in K\text{ and }\overline{B_r(x)}\subset\Omega.
\end{equation}
For a global crack, the same definition is used in every ball.

\begin{theorem}[Planar interior regularity]\label{main:regularity}
Let $(K,u)$ be an admissible compact-perturbation minimizer of \eqref{intro:energy}, and suppose that $K$ is reduced. For every $x\in K$ there are $r>0$ with $\overline{B_r(x)}\subset\Omega$, an integer $m\in\{1,2,3\}$, positive numbers $\ell_1,\ldots,\ell_m$, and injective maps
\[
 \gamma_i\in C^1([0,\ell_i];\R^2),\qquad i=1,\ldots,m,
\]
with
\begin{gather}
 |\gamma_i'(s)|=1\quad(0\leq s\leq\ell_i),\qquad
 \gamma_i(0)=x,\label{intro:arcs}\\
 \gamma_i(\ell_i)\in\partial B_r(x),\qquad
 \gamma_i([0,\ell_i))\subset B_r(x),\notag\\
 K\cap\overline{B_r(x)}=\bigcup_{i=1}^m\gamma_i([0,\ell_i]),\label{intro:star}\\
 \gamma_i([0,\ell_i])\cap\gamma_j([0,\ell_j])=\{x\}\qquad(i\neq j).\notag
\end{gather}
Endpoint derivatives are one-sided. The outward unit tangents $\tau_i=\gamma_i'(0)$ satisfy
\begin{equation}\label{intro:angles}
 \tau_1+\tau_2=0\quad(m=2),\qquad
 \tau_i\cdot\tau_j=-\frac12\quad(i\neq j,\ m=3).
\end{equation}
The set of points with $m=1$ or $m=3$ is locally finite in $\Omega$.
\end{theorem}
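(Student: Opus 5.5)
Theorem~\ref{main:regularity} will follow from a classification of planar generalized global minimizers, combined with the reduction theorem of De Lellis--Focardi \cite[Theorem~1.4.5 and the following paragraph]{DLF}. That theorem states that local regularity of the stated form, with $m\in\{1,2,3\}$, $C^1$ arcs, the angle conditions \eqref{intro:angles}, and local finiteness of tips and triple junctions, holds at every point of a reduced compact-perturbation minimizer provided every generalized global minimizer in $\R^2$ is one of four models. These are: empty crack with constant $u$; a line with locally constant $u$; a propeller of three half-lines at $120^\circ$ with locally constant $u$; or a half-line with $u=\pm\sqrt{2r/\pi}\sin(\theta/2)+c$. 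The $\lambda|u-g|^2$ term is a lower-order perturbation, since $g\in L^\infty$ and the maximum principle gives $\|u\|_\infty\le\|g\|_\infty$. It vanishes in blow-up, so it is absorbed by the existing theory. The whole proof therefore reduces to this classification for a global minimizer $(K,u)$ in $\R^2$.

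I will split by topology. If $\R^2\setminus K$ is disconnected, the David--L\'eger theorem (Bonnet's setting) gives the line or the propeller. If $\R^2\setminus K$ is connected, Bonnet--David classifies the global minimizers whose crack $K$ is connected as the cracktip or a line, and the remaining models are trivial. It therefore suffices to show that $K$ cannot have a nonempty compact component, or more precisely a nonempty compact subset $K_0$ that is relatively open and closed in $K$, whenever $\R^2\setminus K$ is connected.

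A topological lemma then handles the general case. If $K$ has no compact clopen piece and the complement is connected, then every component of $K$ is unbounded. Using Bonnet's bound on the number of unbounded components together with density estimates, I would reduce to the connected case, or show that the components are lines, which contradicts connectedness of the complement unless there is only one.

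The core step is excluding $K_0$. For $e\in\{e_1,e_2\}$ I translate $K_0$ rigidly by $te$ and leave $K\setminus K_0$ fixed. I then relax the displacement by minimizing the Dirichlet energy for the new crack. The length term is unchanged. Minimality gives a nonnegative second variation, and because the relaxation lets $u$ adjust, the first- and second-order terms are expressed through the harmonic conjugate structure. Write $f=\partial_1 u-i\partial_2 u$; it is holomorphic off $K$ and satisfies Neumann conditions on $K$. Summing the two perpendicular directions, I expect a full-plane Hodge identity in which the sum of the second variations is $\le 0$, via an integration by parts of $|f|^2$ against the translation field cut off near $K_0$. Combined with minimality, this forces equality in both directions.

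From equality, the optimal relaxed displacement derivatives $\dot u_e$ satisfy normal conditions on $\partial K_0$ that say $f$ and $f'$ extend holomorphically across $K_0$ in a suitable sense. Holomorphic rigidity, via Liouville plus the $|\nabla u|^2\lesssim$ local growth bound $\int_{B_R}|\nabla u|^2\le 2\pi R$, then gives $\nabla u\equiv 0$ near $K_0$. Once $\nabla u\equiv 0$ there, deleting $K_0$ strictly lowers the energy by $\mathcal H^1(K_0)>0$, which holds because $K$ is reduced. This contradicts minimality.

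The main obstacle is making the relaxed second variation rigorous. That requires regularity of $K_0$ (it is $C^{1,1}$ away from a finite set, by the $\varepsilon$-regularity theory), control of the singular points under translation, and a proof that the Hodge identity really saturates. I would first try this with $K_0$ assumed to be a finite union of $C^{1,1}$ arcs, and then remove the finitely many tips and junctions by a capacity argument.
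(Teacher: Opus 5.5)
Your outline has the same architecture as the paper:
\begin{itemize}
\item reduction through De Lellis--Focardi's Theorem~1.4.5;
\item David--L\'eger when the complement is disconnected;
\item exclusion of a compact relatively clopen $K_0$ by two perpendicular translations with relaxed displacement;
\item the topological lemma, at most one unbounded component, and Bonnet--David.
\end{itemize}
The gap is that the step carrying the whole proof is only announced (``I expect a full-plane Hodge identity''), and the mechanism you point to is not what makes it work.

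\textbf{The missing idea.} Take $X_j=\chi e_j$ with $\chi\equiv1$ near $K_0$ and $\chi\equiv0$ near $K\setminus K_0$, and vary by $u_t\circ\Phi_t=u+t\varphi$.
\begin{itemize}
\item Stability gives $Q_{X_j}+\|\nabla\varphi\|_2^2+2\langle m_{X_j},\nabla\varphi\rangle\ge0$ for all $\varphi\in\mathscr T_K$. Hence $Q_{X_j}\ge\|\Pi_Km_{X_j}\|_2^2$.
\item Since $\det DX_j=0$, a direct computation gives $m_{X_2}=\Rot m_{X_1}$ and $Q_{X_1}+Q_{X_2}=\int|\nabla\chi|^2|\nabla u|^2=\|m_{X_1}\|_2^2$.
\item Full-plane gradients form a subspace of $\mathcal V_K$, so $\|\Pi_Ka\|_2\ge\|\Pi a\|_2$. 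On $\R^2$, Fourier gives $\|\Pi a\|_2^2+\|\Pi\Rot a\|_2^2=\|a\|_2^2$.
\end{itemize}
Comparing the crack-adapted relaxation with the full-plane one forces equality throughout. An integration by parts of $|f|^2$ against the translation field does not produce this.

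\textbf{After equality.} Equality gives $(I-\Pi)m_{X_1}\perp\mathcal V_K$ and $(I-\Pi)\Rot m_{X_1}\perp\mathcal V_K$. One-sided tests on regular arcs then give $\mathcal Bm_{X_1}=0$ there, i.e.\ $h'=0$ for $h=\mathcal C(f\partial_{\bar z}\chi)$. This does not say that $f$ extends across $K_0$. You still need:
\begin{itemize}
\item the splitting $f=[(1-\chi)f+h]+[\chi f-h]$ into functions holomorphic on the connected domains $\C\setminus(K\setminus K_0)$ and $\C\setminus K_0$;
\item the identity theorem on these domains (not Liouville, since neither piece is entire);
\item the growth bound at infinity;
\item a regular arc in $K\setminus K_0$, which is where properness of $K_0$ is used.
\end{itemize}

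\textbf{Your route to rigour would fail.} The $\varepsilon$-regularity theory does not make $K_0$ of class $C^{1,1}$ off a finite set. Local finiteness of the singular set is exactly what Theorem~\ref{main:regularity} asserts. The available theory gives only a relatively open regular set of full $\mathcal H^1$ measure, so ``$C^{1,1}$ arcs first, then capacity'' is circular.

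It is also unnecessary. Because $DX_j=0$ near $K$, the crack moves rigidly and its length is exactly preserved. Every second-order term is a bulk integral over $\operatorname{supp}\nabla\chi\Subset\R^2\setminus K$, where $u$ is harmonic. Singular points never enter. Regularity is used only to find one regular arc in each of $K_0$ and $K\setminus K_0$.

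\textbf{Smaller points.}
\begin{itemize}
\item The bound $\|u\|_\infty\le\|g\|_\infty$ fails for $\lambda=0$, and a global truncation is not a compact perturbation. One truncates inside a good circle, and one must also pass from an arbitrary relatively closed $K$ to the $SBV$ setting, as in Lemma~\ref{red:local}.
\item In the topology step you only need the known fact that there is at most one unbounded component.
\item Deleting $K_0$ requires $u$ to be a single constant around $K_0$. That holds once $f\equiv0$ on all of $\R^2\setminus K$. If you only know $\nabla u\equiv0$ near $K_0$, use the inner variation with $X=\chi(x-a)$ instead.
\end{itemize}
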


We refer to \eqref{intro:arcs}--\eqref{intro:angles} as an \emph{exact local star}. The assertion concerns the entire crack in a closed ball, including the boundary intersections. In particular it is stronger than a parametrization of only a relatively open regular portion.

Theorem~\ref{main:regularity} follows from the global classification in
Theorem~\ref{core:classification} and the cited regularity theory, as
explained in Section~\ref{sec:reduction}. Thus the proof ends with that
classification; no additional endpoint or improvement-of-flatness
argument is needed here.

\section{Reduction to a global classification}\label{sec:reduction}

All references to the established theory below use the published
monograph \cite{DLF}. We first match its standard formulation to the
hypotheses of Theorem~\ref{main:regularity}, which do not assume either
rectifiability of $K$ or boundedness of $u$.

\subsection{Local normalization}

\begin{lemma}\label{red:local}
Every point of $\Omega$ has a ball neighborhood $V\Subset\Omega$ on which
there is a bounded displacement $U$ such that $(K\cap V,U)$ is a reduced
absolute minimizer in the sense of \cite[Definition~1.2.1]{DLF},
\[
 \nabla U=\nabla u\quad\text{a.e. in }V\setminus K,\qquad
 K\cap V=\overline{J_U}^{\,V},\qquad
 \mathcal H^1((K\cap V)\setminus J_U)=0.
\]
Here $J_U$ denotes the approximate jump set of $U\in SBV^2(V)$.
If $\lambda>0$, one may take $U=u$ on $V\setminus K$.
\end{lemma}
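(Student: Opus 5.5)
We proceed in four steps: handle points off $K$, bound $u$ on small balls, build an $SBV$ representative $U$, and check that $(K\cap V,U)$ satisfies \cite[Definition~1.2.1]{DLF}.

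\emph{Points off the crack.} If $x\in\Omega\setminus K$, we pick a ball $V\Subset\Omega\setminus K$. Minimality against competitors supported in $V$ shows that $u$ solves $\Delta u=\lambda(u-g)$ weakly in $V$, so $u$ is locally bounded there by elliptic regularity. We take $U=u$, and all assertions are trivial since $K\cap V=\emptyset$.

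\emph{Boundedness of $u$ for $x\in K$.} Let $M=\|g\|_{L^\infty}$. If $\lambda>0$, truncation $v=\max(-M,\min(u,M))$ is admissible with $L=K$ and does not increase the energy, and it strictly decreases it unless $u=v$. Truncation is not a compact perturbation, so we localize: replace $u$ by its truncation only on a compact $C\subset\Omega$ chosen so that $\partial C$ avoids the jumps. The cleaner route is to use $v=\max(-t,\min(u,t))$ inside a ball $B$ with a cutoff interpolation. The standard argument is \cite[Prop.~on truncation]{DLF}: comparing with $v_t=\min(u,t)$ on $B$ for $t\ge M$ shows $\{u>M\}\cap B$ is null. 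Then $|u|\le M$ and we take $U=u$.

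If $\lambda=0$, $u$ need not be bounded, but only $\nabla u$ and $K$ matter. On a small ball $V$, each connected component of $V\setminus K$ carries a harmonic $u$ determined up to constants by its gradient. We choose $U=u-c_j$ on component $j$, with constants chosen so that $U$ is bounded. For this we first need local boundedness of $u$ on $V'\Subset V$ away from where truncation fails. The plan is instead to truncate directly: $U=\max(-T,\min(u,T))$ with $T$ large. Truncation is a compact perturbation only after multiplying by a cutoff. Since $\lambda=0$, $U$ is still a minimizer on $V$ among competitors with the same boundary data, because on the set $\{|u|>T\}$ the Dirichlet energy only decreases. Minimality then forces $\nabla u=0$ a.e. on $\{|u|>T\}$, so $\nabla U=\nabla u$ a.e. This is the main obstacle: I would handle it carefully via Chebyshev, choosing $T$ so that $\{|u|>T\}$ has small measure, plus the Euler--Lagrange equation.

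\emph{$SBV$ structure and reducedness.} With $\mathcal H^1(K\cap V)<\infty$ and $U\in W^{1,2}(V\setminus K)\cap L^\infty$, the standard lemma (De Giorgi--Carriero--Leaci, \cite[Ch.~1]{DLF}) gives $U\in SBV^2(V)$ with $J_U\subset K$ up to $\mathcal H^1$-null sets. Minimality implies $\mathcal H^1((K\cap V)\setminus J_U)=0$: otherwise removing the non-jump part, which is admissible because there $U$ extends as a $W^{1,2}$ function, strictly lowers the energy. Here we use that $K\setminus\overline{J_U}$ is relatively open in $K$ and can be deleted. The density lower bound of \cite{DLF}, combined with reducedness \eqref{intro:reduced}, gives $K\cap V=\overline{J_U}^{\,V}$. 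Finally, absolute minimality in the sense of Definition~1.2.1 follows from compact-perturbation minimality, because the $SBV$ competitors there translate into admissible pairs $(\overline{J_v},v)$.
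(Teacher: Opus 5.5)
\textbf{Gap 1: boundedness.} Truncating only inside a ball $B$ does not give an admissible competitor. The function equal to $\min(u,t)$ on $B$ and to $u$ outside jumps along $(\partial B\setminus K)\cap\{u>t\}$ unless the trace of $u$ there is at most $t$. Your cutoff interpolation $\eta\min(u,t)+(1-\eta)u$ adds $(\min(u,t)-u)\nabla\eta$ to the gradient, so the energy need not decrease. In your $\lambda=0$ variant, Chebyshev control of the area of $\{|u|>T\}$ says nothing about the trace on a given circle, so it has the same defect.

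Your conclusion $|u|\le\|g\|_\infty$ is moreover false, because the values of $u$ near $\partial\Omega$ act as boundary data. Take $\Omega=B_1$, $K=\varnothing$, $g=0$, $\lambda=1$ and $u=\varepsilon e^{x_1}$. For a competitor $(L,v)$, first truncate $v$ at level $\varepsilon e$, then test $\Delta u=u$ against $v-u$. This gives $\mathcal E_1(L,v)-\mathcal E_1(\varnothing,u)\geq(1-4e^2\varepsilon^2)\mathcal H^1(L)$. So for small $\varepsilon$ this is a compact-perturbation minimizer with $u>\|g\|_\infty$ everywhere.

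The missing idea is slicing. By the coarea inequality for $K$ and Fubini for $u$, for a.e.\ $r$ the circle $\partial B_r(x)$ meets $K$ in finitely many points, and $u$ is $W^{1,2}$, hence bounded, on each complementary arc. Truncate inside such a circle at a level $M$ above these traces and above $\|g\|_\infty$. This is a Sobolev competitor with the same crack and matching traces. Equality of energies then forces $\nabla u=0$ a.e.\ where the truncation acts. For $\lambda>0$, strict decrease of the fidelity makes that set null. This treats both values of $\lambda$ at once.

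\textbf{Gap 2: crack versus jump set.} Your deletion argument conflates $K\setminus J_U$ with $K\setminus\overline{J_U}$. Only the latter is relatively open in $K$, so only it can be deleted while keeping the crack closed. Reducedness then yields $K\cap V=\overline{J_U}^{\,V}$ directly, provided $J_U\subset K$ pointwise; this holds by the Poincar\'e inequality on disks avoiding $K$. The density lower bound you cite presupposes the framework being set up, so it cannot be used here.

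The set $\overline{J_U}\setminus J_U$ may a priori have positive length, and it cannot be deleted. $J_U$ need not be closed, so $(J_U,U)$ is not an admissible pair, and $U$ is Sobolev only on $V\setminus\overline{J_U}$. What you need is a comparison of crack length with the jump length of $SBV^2$ competitors. This is the compact approximation theorem the paper uses:
\begin{itemize}
\item approximate a bounded competitor $w$, equal to $U$ off $B_a$, by functions whose jumps lie on compact $C^1$ manifolds $M_j$ with $\mathcal H^1(M_j)\to\mathcal H^1(J_w\cap B_b)$;
\item glue these to $U$ with a cutoff in $B_b\setminus B_a$;
\item pass to the limit in minimality.
\end{itemize}
The result is
\[
 \int_{B_b}\bigl(|\nabla U|^2+\lambda|U-g|^2\bigr)+\mathcal H^1(K\cap B_a)
 \leq\int_{B_b}\bigl(|\nabla w|^2+\lambda|w-g|^2\bigr)+\mathcal H^1(J_w\cap B_b).
\]
Taking $w=U$ and letting $a\uparrow b$ gives $\mathcal H^1(K\setminus J_U)=0$. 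General $w$ gives weak minimality.

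\textbf{Final step.} The same issue affects your last sentence. If competitors were $SBV$ functions, replacing $v$ by $(\overline{J_v},v)$ could increase the length, since $\mathcal H^1(\overline{J_v})$ may far exceed $\mathcal H^1(J_v)$. The cited definition actually uses pairs with locally Sobolev displacement. For those you still need to truncate to a bound for $U$ and $g$, so that the competitor lies in $W^{1,2}$ of the ball.
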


\begin{proof}
We recall the local reduction, since it avoids adding hypotheses to the
main theorem. The coarea inequality for a set of finite $\mathcal H^1$
measure, followed by Sobolev slicing, provides a circle compactly inside
$\Omega$ which meets $K$ in finitely many points and on each complementary
arc carries a $W^{1,2}$ trace of $u$. These finitely many traces are
bounded. Truncate $u$ inside the circle at a level $M$ bounding both the
traces and $\|g\|_\infty$, and leave it unchanged outside. Matching traces
make this a Sobolev competitor. Both bulk terms decrease, so minimality
forces equality and hence preservation of the gradient. If $\lambda>0$,
strict decrease of $|u-g|^2$ on the truncated set gives equality of the
functions as well. The replacement is still a minimizer, and we restrict
it to a smaller concentric ball $V$.

For clarity, the strong-to-$SBV$ argument applies even before
rectifiability is known. A bounded Sobolev function off a relatively
closed set of finite $\mathcal H^1$ measure extends to $BV$; this follows
by cutting it off near that set with cutoffs of uniformly bounded
$W^{1,1}$ seminorm and support of vanishing area. Its singular derivative
is supported on $K$, and its Cantor part vanishes there because
$\mathcal H^1(K)<\infty$. Thus $U\in SBV^2(V)$ and $J_U\subset K$.
The latter inclusion is pointwise: on disks avoiding $K$, the planar
$W^{1,2}$ Poincar\'e inequality excludes approximate jump points.

The usual compact approximation argument now compares with the jump
length instead of the possibly larger crack length; see
\cite[Appendix~B.4]{DLF} and \cite[Theorem~C]{DFP}.
Here are the relevant localization details. Given a bounded $SBV^2$
competitor $w$ equal to $U$ outside $B_a$, where
$B_a\Subset B_b\Subset V$ are concentric balls, approximate $w|_{B_b}$ by
uniformly bounded functions $w_j$, smooth off compact $C^1$ manifolds
$M_j\Subset B_b$ (possibly with boundary), with $J_{w_j}\subset M_j$,
$\mathcal H^1(M_j\setminus J_{w_j})=0$, strong $L^2$ convergence of
values and gradients, and
$\mathcal H^1(M_j)\to\mathcal H^1(J_w\cap B_b)$.
Choose a cutoff $\eta$ equal to one on $B_a$ and zero near
$\partial B_b$, and use
\[
 v_j=\eta w_j+(1-\eta)U,\qquad
 L_j=(K\setminus B_a)\cup M_j.
\]
Outside $B_b$ use the original pair. Radii may be chosen to have zero
crack length on their boundary. The annulus is unchanged in the limit,
since $w=U$ there. Taking $j\to\infty$ in minimality yields
\[
 \int_{B_b}\bigl(|\nabla U|^2+\lambda|U-g|^2\bigr)
       +\mathcal H^1(K\cap B_a)
 \leq
 \int_{B_b}\bigl(|\nabla w|^2+\lambda|w-g|^2\bigr)
       +\mathcal H^1(J_w\cap B_b).
\]
Keeping $w$ fixed, let $a\uparrow b$. Taking $w=U$ first gives
$\mathcal H^1(K\setminus J_U)=0$ locally. Reducedness and relative
closedness then imply $K=\overline{J_U}$ in $V$.
The same comparison, with unbounded competitors first truncated, proves
weak minimality. In particular $K$ is
rectifiable. Finally, a finite-energy competitor in the locally Sobolev
class of \cite[Definition~1.2.1]{DLF} can be truncated to a bound for
$U$ and $g$; on the bounded ball it then belongs to $W^{1,2}$ and is an
admissible competitor in the original formulation. This gives the
asserted absolute minimality.
\end{proof}

\subsection{The established global theory}

We use \emph{generalized global minimizer} exactly as in
\cite[Definition~2.2.4]{DLF}: it is a triple $(u,K,\{p_{kl}\})$ arising
from rescaled absolute minimizers. The extended-real parameters $p_{kl}$
record relative additive constants on the connected components of
$\R^2\setminus K$. They are part of the definition and are retained.
In particular, no invariance under critical dilations is included in
this terminology.

The compactness theorem \cite[Theorem~2.2.3]{DLF} supplies a generalized
global limit along a subsequence of every sequence of interior
rescalings. It gives local Hausdorff convergence of cracks and separate
convergence of Dirichlet energies and length measures. Its limits are
reduced, rectifiable, and have locally finite length; their displacement
belongs to $W^{1,2}(B_R\setminus K)$ for every $R>0$.
For every generalized global minimizer, the upper density bound
\cite[Lemma~2.1.2]{DLF} gives
\begin{equation}\label{core:growth}
 \int_{B_R(z)\setminus K}|\nabla u|^2
       +\mathcal H^1(K\cap B_R(z))\leq 2\pi R
 \qquad(z\in\R^2,\ R>0).
\end{equation}
Moreover, \cite[Corollary~3.1.5]{DLF} gives a relatively open regular
subset of $K$ of full $\mathcal H^1$ measure. Near each of its points
the entire crack is a single embedded $C^1$ arc.

If $D=\R^2\setminus K$ is connected, the exterior separation condition
on competitors in \cite[Definition~2.2.2]{DLF} is vacuous.
Consequently \cite[Theorem~2.2.3(ii)]{DLF} makes $(K,u)$ an ordinary
absolute minimizer of the fidelity-free energy on the plane.
In particular, all compactly supported displacement variations and
smooth material variations used in Section~\ref{sec:core} are admissible.
No assertion about homogeneity is used in these facts.

The geometric models are the empty crack, a line, and three half-lines
meeting at $120^\circ$, with vanishing bulk gradient, and the canonical
crack tip. Up to translation, rotation, sign, and an additive constant,
the latter is
\begin{equation}\label{intro:canonical}
 K=\{(r,0):r\geq0\},\qquad
 u(r,\theta)=\sqrt{\frac2\pi}\sqrt r\cos\frac\theta2,
 \quad r>0,\quad0<\theta<2\pi.
\end{equation}
The component parameters in the disconnected models are interpreted
as in \cite[Section~2.4]{DLF}.
We shall use the following classification results:
\begin{enumerate}[label=\textup{(\roman*)}]
\item If $\R^2\setminus K$ is disconnected, the minimizer is a pure
jump or a triod \cite[Theorem~4.4.1]{DLF}.
\item The crack has at most one unbounded connected component
\cite[Corollary~4.4.3]{DLF}.
\item If all but at most one connected component of $K$ lie in one
common compact set, the minimizer is elementary or a canonical crack
tip \cite[Theorem~1.4.6, first paragraph]{DLF}.
\end{enumerate}
These are the established David--L\'eger and Bonnet--David classification
results, in the form needed for generalized limits.

\begin{proof}[Deduction of Theorem~\ref{main:regularity} from
Theorem~\ref{core:classification}]
Lemma~\ref{red:local} reduces the assertion locally to reduced absolute
minimizers in the standard theory. Theorem~\ref{core:classification}
classifies every generalized global minimizer, so
\cite[Theorem~1.4.5]{DLF} gives the Mumford--Shah regularity conclusion.
The local regularity statements, including the prescribed angles and
$C^1$ regularity up to the endpoints, are contained in
\cite[Theorem~1.3.3]{DLF}. They allow a bounded measurable fidelity datum.
The source normalization $\lambda\leq1$ is harmless: with
$a=(1+\lambda)^{-1/2}$, the change
\[
 x=x_0+ay,\qquad \widehat u(y)=a^{-1/2}u(x_0+ay),\qquad
 \widehat g(y)=a^{-1/2}g(x_0+ay)
\]
replaces the coefficient by $\widehat\lambda=\lambda a^2\leq1$.

To obtain the exact closed-ball formulation, parametrize the finitely
many local $C^1$ arms regularly by $q_i(t)$, with $q_i(0)=x$.
Then $\frac{d}{dt}|q_i(t)-x|\to|q_i'(0)|>0$ as $t\downarrow0$.
A sufficiently small circle therefore meets each initial arm exactly
once and misses all remaining pieces. Restriction and arclength
reparametrization give \eqref{intro:arcs}--\eqref{intro:angles}.
In such a neighborhood every point other than the center lies in the
interior of one arc. Endpoints and triple junctions consequently have
no interior accumulation point, which gives local finiteness.
\end{proof}

\section{Compact translations and global classification}\label{sec:core}

The central argument is a rigidity statement for compact pieces of a global crack. It uses two independent translations, each accompanied by an optimal first-order correction of the displacement. Their second-order costs exhaust a full-plane Hodge identity. Equality then gives two independent normal conditions on every regular portion of the crack.

\subsection{Admissible variations and their relaxation}

For a closed set $K\subset\R^2$, put $D=\R^2\setminus K$ and define the real test space
\[
 \mathscr T_K=\{\varphi\in W^{1,2}(D):
       \varphi=0\text{ a.e. outside some compact subset of }\R^2\}.
\]
The compact support in this definition is a support in the plane; it may meet $K$. We shall use the closed subspace
\[
 \mathcal V_K=\overline{\{\nabla\varphi:\varphi\in\mathscr T_K\}}
                   ^{\,L^2(\R^2;\R^2)}
\]
and its orthogonal projection $\Pi_K$. Sets of zero planar measure are ignored when identifying vector fields on $D$ with vector fields on the plane.

For a generalized global minimizer with connected complement, absolute
minimality gives the weak equation
\begin{equation}\label{core:weak}
 \int_D\nabla u\cdot\nabla\varphi=0\qquad(\varphi\in\mathscr T_K)
\end{equation}
and the inner variation identity
\begin{equation}\label{core:inner}
 \int_D\bigl(|\nabla u|^2\operatorname{div}X
              -2\nabla u\cdot DX\nabla u\bigr)
 +\int_K\tau\cdot DX\tau\,\dd\mathcal H^1=0.
\end{equation}
Here $X\in C_c^\infty(\R^2;\R^2)$ and $\tau$ is a unit approximate
tangent to $K$. These are the usual variational identities; see
\cite[Section~2.5]{DLF}.
Minimality also gives second-order stability under the material variations
\begin{equation}\label{core:material}
 \Phi_t=\operatorname{Id}+tX,\qquad K_t=\Phi_t(K),\qquad
 u_t\circ\Phi_t=u+t\varphi.
\end{equation}
Differences of the infinite global energies are evaluated on a bounded
region containing the support of the variation.

\begin{proposition}[Compact translation rigidity]\label{core:rigidity}
Let $(u,K,\{p_{kl}\})$ be a reduced generalized global minimizer with
connected complement. There is no nonempty compact proper subset of $K$
which is both open and closed relative to $K$.
\end{proposition}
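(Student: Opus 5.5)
Suppose, for contradiction, that $P\subset K$ is nonempty, compact, and both open and closed relative to $K$. Put $Q=K\setminus P$, which is closed with $\operatorname{dist}(P,Q)=\delta>0$. Fix a cutoff $\chi\in C_c^\infty(\R^2)$ with $\chi=1$ on the $\delta/3$-neighbourhood of $P$ and $\chi=0$ on the $2\delta/3$-neighbourhood of $Q$. For a unit vector $e$ take the material variation \eqref{core:material} with $X=\chi e$. Near $P$ this is a rigid translation of $P$, and it leaves $Q$ untouched. Since $D$ is connected, these variations are admissible by the discussion preceding \eqref{core:weak}.

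\emph{Step 1: first and second variation.} The length term is invariant because $DX=0$ near $K$, so the surface integral in \eqref{core:inner} and its second-order analogue vanish. The energy change then lives entirely in the bulk, on the annular region $A=\{0<\operatorname{dist}(\cdot,P)<\delta\}\cup\ldots$ where $D\chi\neq0$. Write $\psi=\varphi-X\cdot\nabla u$ (formally $-\partial_e u$ near $P$). The second variation for the pair $(X,\varphi)$ becomes $\int_D|\nabla\psi|^2$ plus boundary-free terms that vanish because $\nabla u$ is harmonic off $K$. We then minimize over $\varphi\in\mathscr T_K$. The relaxed second variation is $\|(I-\Pi_K)G_e\|^2$ minus a full-plane quantity $\int|G_e|^2$, where $G_e$ is the $L^2$ field equal to $\nabla(\chi\partial_e u)$ off $K$. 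Stability says this relaxed quantity is $\ge 0$.

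\emph{Step 2: the Hodge identity.} On $\R^2$, $L^2=\overline{\nabla W^{1,2}}\oplus \Rot\,\overline{\nabla W^{1,2}}$. Since $\mathcal V_K$ contains all full-plane gradients, $(I-\Pi_K)G_e$ lies in the co-gradient part restricted by the crack. I would sum the relaxed second variations over $e=e_1,e_2$. Combining with the weak equation \eqref{core:weak} and the inner identity \eqref{core:inner} (which gives that $|\nabla u|^2$ has the right conservation law, i.e.\ the Hopf differential $(\partial_z u)^2$ is holomorphic off $K$), the sum should equal minus itself plus a nonnegative term. This forces equality in both stability inequalities. Equality means the optimal corrections $\varphi_e$ exist in $\mathscr T_K$ (no defect from the closure) and satisfy Euler–Lagrange equations. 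On regular arcs of $P$, these yield the two normal conditions $\partial_\nu(\partial_{e}u)=0$ for $e=e_1,e_2$, in addition to $\partial_\nu u=0$.

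\emph{Step 3: holomorphic rigidity.} Set $f=\partial_z u=\tfrac12(u_x-iu_y)$, holomorphic on $D$. The two normal conditions on a regular arc say that $f'$ satisfies both Neumann-type conditions. Combined with $\partial_\nu u=0$ (i.e.\ $\operatorname{Im}(f\,\dot\gamma)=0$ along the arc), they force $f'\dot\gamma^2$ to be both real and imaginary, hence $f'=0$ on the arc. By the regular-set result \cite[Corollary~3.1.5]{DLF} the arcs carry full measure, so unique continuation gives $f'\equiv0$ on the connected $D$. Thus $\nabla u$ is constant, and by \eqref{core:growth} with $R\to\infty$ it vanishes. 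Then $P$ can be deleted from $K$ with $u$ unchanged (the corrections match across $P$, since $u$ is constant near $P$ on both sides by connectedness). This strictly lowers the length by $\mathcal H^1(P)>0$, where positivity follows from reducedness. That contradicts minimality.

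The main obstacle is Step 2: making the claimed exhaustion of the Hodge identity rigorous. In particular one must show that the closure defect $\mathcal V_K$ versus genuine gradients produces no loss, and control the singular points of $P$ (tips, junctions), where $\partial_e u$ is not in $W^{1,2}$. I would first try to work with the $L^2$ fields $G_e$ directly and use the growth bound \eqref{core:growth} together with cutoff at singular points of zero capacity.
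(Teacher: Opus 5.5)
Your architecture (cutoff $\chi$, the translations $\chi e_1,\chi e_2$, relaxation in the displacement, the planar Hodge identity, holomorphic rigidity, removal of $P$) matches the paper's. There is a genuine gap, however: Step~3 has no valid input, because equality does not give $\partial_\nu(\partial_eu)=0$.

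With $m=m_{X_1}$ as in \eqref{core:load}, equality says $\Pi_Km=\Pi m$ and $\Pi_K(\Rot m)=\Pi(\Rot m)$. That is, the divergence-free residuals $R_1=m-\Pi m$ and $R_2=\Rot m-\Pi(\Rot m)$ are orthogonal to $\mathcal V_K$. Since $m$ is supported in $\operatorname{supp}\nabla\chi\Subset D$, near $K$ one has $R_1=-\frac12\overline{\mathcal Bm}$ and $R_2=-\Rot R_1$, both smooth across $K$. One-sided tests then give $\mathcal Bm=0$ on regular arcs. With $F=u_x-iu_y$ and $h=\mathcal C(F\partial_{\bar z}\chi)$, this reads $h'=0$. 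It is a condition on the part of $F$ produced in the transition region, not on $F$ itself.

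The Neumann condition belongs to the Lagrangian correction (the potential of $\Pi m$), not to your $\psi=\varphi-\partial_eu$. Your Eulerian integral $\int|\nabla\psi|^2$ is infinite near tips of $P$ and is not the second variation. Since $F=h+(F-h)$ near $P$, the condition $h'=0$ says nothing directly about $f'$, so your unique continuation cannot start.

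The missing idea is the splitting $F=F_0+F_1$, with $F_0=(1-\chi)F+h$ and $F_1=\chi F-h$. Here $F_0$ extends holomorphically across $P$ and equals $h$ near $P$, while $F_1$ extends across $K\setminus P$ and equals $-h$ there. The identity theorem on a regular arc of $P$, respectively of $K\setminus P$, makes both constant. Then $h=O(|z|^{-1})$ together with \eqref{core:growth} makes both vanish. Only at that point is $F\equiv0$, and your deletion of $P$ concludes. This is exactly where $P\neq K$ is used, and your setup drops properness. For $P=K$ one only obtains $F_0=0$, i.e.\ $F=-h$ near infinity.

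Step~2 also needs a different formulation, not a capacity argument. Near a tip of $P$ the field $G_e=\nabla(\chi\partial_eu)$ behaves like $r^{-3/2}$ (compare \eqref{intro:canonical}), so it is not in $L^2$. Moreover, your relaxed quantity as written equals $-\|\Pi_KG_e\|^2\le0$. Use Lemma~\ref{core:relaxation} instead: its load $m_X$ lives where $\nabla\chi\neq0$, a compact subset of $D$, so singular points never enter.

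For $DX=e\otimes\nabla\chi$ one has $\det DX=0$ and the surface term vanishes. Pointwise, $m_{X_2}=\Rot m_{X_1}$ and $Q_{X_1}+Q_{X_2}=\int_D|\nabla u|^2|\nabla\chi|^2=\|m\|^2$. Combined with $\mathcal V\subset\mathcal V_K$ and the planar identity $\|\Pi m\|^2+\|\Pi(\Rot m)\|^2=\|m\|^2$, this yields
\[
 \|m\|^2\ge\|\Pi_Km\|^2+\|\Pi_K(\Rot m)\|^2\ge\|\Pi m\|^2+\|\Pi(\Rot m)\|^2=\|m\|^2 .
\]
No Hopf-differential identity is involved; \eqref{core:inner} only removes the first-order term.
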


\begin{lemma}[Relaxed second variation]\label{core:relaxation}
For $X\in C_c^\infty(\R^2;\R^2)$, write $A=DX$, $p=\nabla u$, and
\begin{align}
 m_X&=\bigl((\operatorname{tr}A)I-A-A^T\bigr)p,\label{core:load}\\
 Q_X&=\int_D\bigl(|A^Tp|^2-\det A\,|p|^2\bigr)
  +\frac12\int_K\bigl(|A\tau|^2-(\tau\cdot A\tau)^2\bigr)
                  \,\dd\mathcal H^1.\label{core:quadratic}
\end{align}
For a generalized global minimizer with connected complement,
\begin{equation}\label{core:projectionbound}
 Q_X\geq\|\Pi_Km_X\|_{L^2}^2.
\end{equation}
\end{lemma}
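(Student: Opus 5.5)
We plan to compute the second derivative of the energy along the material variations \eqref{core:material}, then minimize it over the displacement correction $\varphi$. Fix $X\in C_c^\infty(\R^2;\R^2)$ and $\varphi\in\mathscr T_K$, and write $A=DX$ and $p=\nabla u$. On a large ball $B$ containing the supports, we change variables $y=\Phi_t(x)$. The bulk term becomes
\[
 \int_{D\cap B}\bigl|(I+tA)^{-T}(p+t\nabla\varphi)\bigr|^2\det(I+tA)\,\dd x .
\]
The length term becomes $\int_K|(I+tA)\tau|\,\dd\mathcal H^1$, by the area formula for rectifiable sets; rectifiability is supplied by the compactness theorem \cite[Theorem~2.2.3]{DLF}. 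Since $D$ is connected, $(K,u)$ is an ordinary absolute minimizer, and the pairs $(K_t,u_t)$ are admissible compact perturbations. Hence $E(t)-E(0)\geq0$ for all small $t$.

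First, we expand to second order in $t$ using
\[
 (I+tA)^{-1}=I-tA+t^2A^2+O(t^3),\qquad
 \det(I+tA)=1+t\operatorname{tr}A+t^2\det A+O(t^3).
\]
The first-order term is the combination of \eqref{core:weak} and \eqref{core:inner}, so it vanishes. The $t^2$ coefficient of the length term is the pointwise identity
\[
 \tfrac12\bigl(|A\tau|^2-(\tau\cdot A\tau)^2\bigr).
\]
In the bulk, the cross term in $\varphi$ is $2t^2\int_D\nabla\varphi\cdot\bigl((\operatorname{tr}A)I-A-A^T\bigr)p$, which should equal $2t^2\int\nabla\varphi\cdot m_X$. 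The pure $\varphi$ term is $t^2\int|\nabla\varphi|^2$. The remaining $t^2$ terms in $p$ alone are
\[
 |A^Tp|^2+2\,p\cdot A^2\,\cdot{}
\]
(suitably symmetrized), together with
\[
 -\operatorname{tr}A\,(2p\cdot Ap)+\det A\,|p|^2 .
\]
These must be reorganized, modulo terms that integrate to zero by \eqref{core:inner} applied to an auxiliary field, into $\int(|A^Tp|^2-\det A|p|^2)+|m_X|^2$. We expect the bookkeeping to produce exactly
\[
 \tfrac12E''(0)=Q_X-\|m_X\|^2+\|m_X+\nabla\varphi\|^2 .
\]
Equivalently, with the $\varphi$ dependence isolated,
\[
 \tfrac12E''(0)=\Bigl[Q_X+2\!\int\!\nabla\varphi\cdot m_X+\!\int\!|\nabla\varphi|^2\Bigr],
\]
and the normalization of $Q_X$ in \eqref{core:quadratic} is chosen so that this holds. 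Possibly a second-order inner-variation identity is needed, obtained by differentiating \eqref{core:inner} along the flow $\operatorname{Id}+tX$, or by using $X\cdot\nabla X$ as test field.

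Second, stability $E''(0)\geq0$ then gives
\[
 Q_X+2\langle\nabla\varphi,m_X\rangle+\|\nabla\varphi\|^2\geq0
 \qquad\text{for all }\varphi\in\mathscr T_K .
\]
We replace $\varphi$ by $s\varphi$ and optimize over $\nabla\varphi$, which ranges over a dense subset of $\mathcal V_K$. The infimum of $\|\nabla\varphi\|^2+2\langle\nabla\varphi,m_X\rangle$ over $\mathcal V_K$ is $-\|\Pi_Km_X\|^2$, attained in the limit $\nabla\varphi\to-\Pi_Km_X$. This yields \eqref{core:projectionbound}. Continuity in $L^2$ makes the density step harmless, and $m_X\in L^2$ because $X$ has compact support and \eqref{core:growth} holds.

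The main obstacle is justifying the expansion rigorously and the exact algebraic identification of the $\varphi$-free $t^2$ coefficient with $Q_X$. For the rigor, the integrands are polynomial in $t$ times bounded smooth factors with $p\in L^2_{\rm loc}$, so dominated convergence applies and the $O(t^3)$ remainder is uniform. The algebra is the real work. We would first verify it for constant $A$ in a frame adapted to $p$. We would then identify the discrepancy terms as a divergence-type expression controlled by \eqref{core:inner} with the field $AX$, together with the planar identity $\operatorname{cof}$-divergence free, $\operatorname{div}(\operatorname{cof}A)=0$. This Piola identity is what turns the $\det A$ contribution into a null Lagrangian plus boundary length terms.
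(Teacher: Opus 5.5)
Your outline is the paper's proof: pull back by $\Phi_t$, cancel the first-order term with \eqref{core:weak} and \eqref{core:inner}, identify the $t^2$ coefficient as $Q_X+\|\nabla\varphi\|_2^2+2\langle m_X,\nabla\varphi\rangle$, and minimize over $\mathcal V_K$. Your length expansion, your cross and pure-$\varphi$ terms, and your final projection step are correct.

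\textbf{The gap.} It is the step you flag as ``the real work''. You never show that the $\varphi$-free bulk part of the $t^2$ coefficient equals $|A^Tp|^2-\det A\,|p|^2$. You only say you expect it, and that $Q_X$ ``is chosen so that this holds''. That identity is the substantive content of the lemma, so as written the argument assumes what it must prove.

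The mechanisms you propose for it also point the wrong way. The identity is pointwise, so auxiliary applications of \eqref{core:inner} (with $AX$ or $X\cdot\nabla X$) and the Piola identity are unnecessary. Each use of \eqref{core:inner} with a field $Y$ would also trade bulk terms for a length term $\int_K\tau\cdot DY\tau\,\dd\mathcal H^1$ that does not appear in \eqref{core:quadratic}.

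Your intermediate target ``$\int(|A^Tp|^2-\det A|p|^2)+|m_X|^2$'', taken literally, is inconsistent with your final formula. With the extra $\|m_X\|_2^2$ the second variation would be $Q_X+\|m_X+\nabla\varphi\|_2^2$. Minimizing would then give only $Q_X\geq\|\Pi_Km_X\|_2^2-\|m_X\|_2^2$.

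\textbf{The missing idea.} It is the two-dimensional Cayley--Hamilton formula $A^2-(\operatorname{tr}A)A+\det A\,I=0$. Your list of $p$-only terms, read as
\[
 |A^Tp|^2+2\,p\cdot A^2p-2(\operatorname{tr}A)\,p\cdot Ap+\det A\,|p|^2,
\]
is right. Cayley--Hamilton gives $p\cdot A^2p-(\operatorname{tr}A)\,p\cdot Ap=-\det A\,|p|^2$ pointwise, so the sum collapses to $|A^Tp|^2-\det A\,|p|^2$.

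In matrix form, as in the paper, $M_t:=\det(I+tA)(I+tA)^{-1}(I+tA)^{-T}$ satisfies
\[
 M_t=I+t\bigl((\operatorname{tr}A)I-A-A^T\bigr)+t^2\bigl(AA^T-\det A\,I\bigr)+O(t^3).
\]
Expanding $(p+t\nabla\varphi)\cdot M_t(p+t\nabla\varphi)$ then yields the coefficient $Q_X+\|\nabla\varphi\|_2^2+2\langle m_X,\nabla\varphi\rangle$ with no integration by parts. With this line inserted, the rest of your proof goes through unchanged.
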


\begin{proof}
Pulling back the bulk integral in \eqref{core:material} gives the coefficient matrix
\begin{align*}
 M_t&=\det(I+tA)(I+tA)^{-1}(I+tA)^{-T}\\
 &=I+t\bigl((\operatorname{tr}A)I-A-A^T\bigr)
       +t^2\bigl(AA^T-\det A\,I\bigr)+O(t^3).
\end{align*}
The second-order identity follows from the two-dimensional Cayley--Hamilton formula $A^2-(\operatorname{tr}A)A+\det A\,I=0$. Expanding $(p+t\nabla\varphi)\cdot M_t(p+t\nabla\varphi)$ and the tangential Jacobian $|(I+tA)\tau|$, the coefficient of $t^2$ in the energy difference is
\[
 Q_X+\|\nabla\varphi\|_2^2
          +2\langle m_X,\nabla\varphi\rangle.
\]
The coefficient of $t$ vanishes by \eqref{core:weak} and \eqref{core:inner}. The remainder estimates are justified on a fixed bounded set: $DX$ is bounded, $u$ has finite local energy and $\varphi$ has finite Sobolev norm. In particular no differentiation of $u$ across $K$ is involved.

Second-order stability makes this quadratic expression nonnegative for every actual test $\varphi$. Its infimum over the closure of their gradients is $Q_X-\|\Pi_Km_X\|_2^2$. This proves \eqref{core:projectionbound}. The argument does not require the projected vector field itself to possess a globally square-integrable potential.
\end{proof}

\subsection{Two translations and equality in the Hodge decomposition}

\begin{proof}[Proof of Proposition~\ref{core:rigidity}]
Suppose that $S\subsetneq K$ is nonempty, compact and relatively open and closed. Its nonempty complement $K\setminus S$ is closed and has positive distance from $S$. We may therefore choose a real $\chi\in C_c^\infty(\R^2)$ such that
\[
 \chi=1\quad\text{near }S,\qquad
 \chi=0\quad\text{near }K\setminus S.
\]
Thus $\operatorname{supp}\nabla\chi$ is a compact subset of $D$. Reducedness and relative openness imply
\begin{equation}\label{core:positive}
 0<\mathcal H^1(S)<\infty.
\end{equation}

Set $X_1=\chi e_1$ and $X_2=\chi e_2$. These vector fields translate $S$ rigidly and fix the remainder near the crack. Their derivative matrices vanish on a neighborhood of $K$, so their surface second variations vanish exactly. Let $\Rot(a,b)=(-b,a)$. Formula~\eqref{core:load} gives
\begin{align}
 m:=m_{X_1}&=(-\chi_xu_x-\chi_yu_y,
                         -\chi_yu_x+\chi_xu_y),\notag\\
 m_{X_2}&=\Rot m,\label{core:twoloads}\\
 Q_{X_1}&=\int_Du_x^2|\nabla\chi|^2,\qquad
 Q_{X_2}=\int_Du_y^2|\nabla\chi|^2.\notag
\end{align}
Consequently
\begin{equation}\label{core:totalcost}
 Q_{X_1}+Q_{X_2}=\|m\|_2^2.
\end{equation}
The field $m$ is smooth and compactly supported in $D$, since \eqref{core:weak} makes $u$ harmonic there.

Let $\mathcal V$ be the $L^2$ closure of gradients of real functions in $C_c^\infty(\R^2)$, and let $\Pi$ be its orthogonal projection. Then $\mathcal V\subset\mathcal V_K$. The full-plane Fourier projection is the orthogonal projection onto each nonzero frequency direction; hence
\begin{equation}\label{core:hodge}
 \|\Pi m\|_2^2+\|\Pi(\Rot m)\|_2^2=\|m\|_2^2.
\end{equation}
Combining \eqref{core:projectionbound}, \eqref{core:totalcost} and \eqref{core:hodge}, we obtain
\begin{align*}
 \|m\|_2^2
 &\geq\|\Pi_Km\|_2^2+\|\Pi_K(\Rot m)\|_2^2\\
 &\geq\|\Pi m\|_2^2+\|\Pi(\Rot m)\|_2^2
 =\|m\|_2^2.
\end{align*}
Both nonnegative projection differences vanish. Thus
\begin{equation}\label{core:residuals}
 R_1=m-\Pi m\perp\mathcal V_K,
 \qquad R_2=\Rot m-\Pi(\Rot m)\perp\mathcal V_K.
\end{equation}

\subsubsection*{Normal conditions on a regular arc}
Identify a vector $(a_1,a_2)$ with $a_1+ia_2$, and define the holomorphic complex gradient
\[
 F=u_x-iu_y\quad\text{on }D.
\]
Then $m=-2F\partial_{\bar z}\chi$. Let $\mathcal B=\partial_z\partial_{\bar z}^{-1}$ be the Beurling transform on the plane. The Fourier formula for $\Pi$ is
\begin{equation}\label{core:complexprojection}
 \Pi a=\frac12\bigl(a+\overline{\mathcal Ba}\bigr).
\end{equation}
Equivalently, the right-hand side is curl-free and its complement is divergence-free, which characterizes the orthogonal decomposition in $L^2$. Because $\mathcal B$ is complex-linear, wherever $m=0$ one has
\begin{equation}\label{core:rotation}
 R_1=-\frac12\overline{\mathcal Bm},\qquad
 R_2=\frac i2\overline{\mathcal Bm}=-\Rot R_1.
\end{equation}
Moreover $\mathcal Bm$ is holomorphic away from $\operatorname{supp}m$: distributionally $\partial_{\bar z}\mathcal Bm=\partial_zm$. In particular both residual fields are smooth across $K$ locally.

Choose a neighborhood in which the entire crack is a $C^1$ graph. A smooth ambient cutoff restricted to one side of that graph and extended by zero to the other side belongs to $\mathscr T_K$. Its support can be chosen away from the edge of the neighborhood, and its trace can be varied on a smaller subarc. Since each $R_j$ is divergence-free, \eqref{core:residuals} and integration by parts give
\[
 R_1\cdot\nu=R_2\cdot\nu=0
\]
on the subarc. These equalities hold first as trace distributions and then pointwise by continuity. In view of \eqref{core:rotation}, $R_1$ is orthogonal to two independent directions, so
\begin{equation}\label{core:arczeros}
 \mathcal Bm=0\quad\text{on every such regular subarc}.
\end{equation}
Only local one-sided tests have been used; connectedness of the complement does not prevent those tests from having different traces on the two banks.

\subsubsection*{Holomorphic separation of the compact piece}
For a smooth compactly supported complex function $a$, define
\[
 \mathcal Ca(z)=\frac1\pi\int_\C\frac{a(\zeta)}{z-\zeta}\,\dd A(\zeta),
 \qquad \partial_{\bar z}\mathcal Ca=a.
\]
Put
\[
 h=\mathcal C(F\partial_{\bar z}\chi)=-\tfrac12\mathcal Cm.
\]
The function $h$ is smooth on the plane, holomorphic off $\operatorname{supp}m$, and satisfies $h(z)=O(|z|^{-1})$ uniformly at infinity. On a neighborhood of $K$,
\begin{equation}\label{core:hderivative}
 h'=-\tfrac12\mathcal Bm.
\end{equation}
The two functions
\begin{equation}\label{core:split}
 F_0=(1-\chi)F+h,\qquad F_1=\chi F-h
\end{equation}
are holomorphic on $D$ and sum to $F$. The first extends holomorphically through $S$, and the second through $K\setminus S$. Their domains of extension are
\[
 \C\setminus(K\setminus S),\qquad \C\setminus S,
\]
respectively. Both domains are connected: they contain connected $D$, and every open component meets $D$ because $K$ has zero area.

By \eqref{core:positive} and full-measure regularity, $S$ contains a regular subarc. Near it $F_0=h$, and \eqref{core:arczeros}--\eqref{core:hderivative} imply $F_0'=0$ on that arc. The holomorphic identity theorem gives $F_0=A$ for a complex constant $A$ throughout its domain. Outside a large disk, $\chi=0$, so
\[
 F=A-h=A+O(|z|^{-1})\quad\text{on }D.
\]
If $A\neq0$, integration over the complement of the area-zero set $K$ gives quadratic bulk-energy growth. This contradicts \eqref{core:growth}. Thus $A=0$.

The nonempty set $K\setminus S$ also contains a positive-length regular subarc, by reducedness and full-measure regularity. Near that subarc $F_1=-h$, so the same argument makes $F_1$ constant. Outside a large disk $F_1=-h=O(|z|^{-1})$, and the constant is zero. Therefore $F=0$ on $D$.

\subsubsection*{Contradiction with the length variation}
The bulk gradient vanishes. For any fixed $a\in\R^2$, use
\[
 X(x)=\chi(x)(x-a)
\]
in \eqref{core:inner}. On $S$, $DX=I$; near $K\setminus S$, $DX=0$. Consequently the inner variation reduces to
\[
 0=\int_S1\,\dd\mathcal H^1=\mathcal H^1(S)>0,
\]
a contradiction. This proves the proposition.
\end{proof}

\subsection{Topological consequence and classification}

\begin{lemma}\label{core:topology}
Let $K\subset\R^2$ be closed and reduced, and assume that it is locally of finite length. If $K$ has no nonempty compact relatively open-and-closed subset, then every connected component of $K$ is unbounded.
\end{lemma}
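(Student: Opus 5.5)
The plan is a purely point-set argument: starting from a bounded component, I will build a nonempty compact subset of $K$ that is open and closed relative to $K$. Suppose, to the contrary, that some connected component $C$ of $K$ is bounded. Since $K$ is closed, $C$ is closed, hence compact. Choose $R>0$ with $C\subset B_R$, and put
\[
 Y=K\cap\overline{B_{R+1}}.
\]
This $Y$ is a compact Hausdorff space.

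First I will identify the component of $Y$ containing $C$. That component is a connected subset of $K$ containing $C$, so by maximality of $C$ it equals $C$. Next I will invoke the classical fact that in a compact Hausdorff space components coincide with quasi-components. This gives
\[
 C=\bigcap\{U: U\text{ relatively open and closed in }Y,\ C\subset U\}.
\]
The set $Y\cap\partial B_{R+1}$ is compact and disjoint from $C$. Hence the complements of these clopen sets cover it, and finitely many suffice. The intersection of the corresponding finitely many sets is a set $U$ that is clopen in $Y$, contains $C$, and avoids $\partial B_{R+1}$. Thus
\[
 U\subset K\cap B_{R+1}.
\]

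It remains to check that $U$ is clopen in $K$, not merely in $Y$. $U$ is closed in the compact set $Y$, so it is compact and in particular closed in $K$. $U$ is open in $Y$, so $U=Y\cap W$ for some open $W\subset\R^2$. Since $U\subset B_{R+1}$, we get $U=K\cap(W\cap B_{R+1})$, which is open in $K$. Therefore $U$ is a nonempty compact relatively open-and-closed subset of $K$, contradicting the hypothesis. (If $U=K$, then $K$ itself is such a set, which the hypothesis also excludes.)

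The only step with real content is the equality of components and quasi-components in compact Hausdorff spaces. I will cite it as standard, or reprove it quickly: if the quasi-component $Q$ were disconnected into closed sets $A,B$, normality of $Y$ together with a compactness argument produces a clopen set of $Y$ separating $A$ from $B$, contradicting the definition of $Q$. Reducedness and local finiteness of length are not needed for this lemma. They matter only in how the lemma is combined with Proposition~\ref{core:rigidity} and the classification results.
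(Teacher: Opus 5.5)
Your proposal is correct and follows essentially the same route as the paper: restrict to the compact set $K\cap\overline{B_{R+1}}$, use that a component of a compact Hausdorff space is the intersection of its clopen neighbourhoods, take a finite intersection that misses the boundary sphere, and check that the result is a nonempty compact set that is open and closed relative to $K$. The differences are cosmetic: you use radius $R+1$ instead of $R$, and you verify relative openness in $K$ explicitly rather than through the paper's remark about positive distance from the sphere.
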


\begin{proof}
Suppose that $C$ is a compact component, and choose $R$ with $C\Subset B_R$. In the compact space $X=K\cap\overline B_R$, the component containing $C$ is exactly $C$. A component of a compact Hausdorff space equals the intersection of its clopen neighborhoods. For each point of $X\cap\partial B_R$, choose such a neighborhood containing $C$ and excluding that point. Compactness of $X\cap\partial B_R$ gives a finite intersection $W$ of these neighborhoods which misses the sphere. If that sphere intersection is empty, take $W=X$.

The set $W$ is compact and clopen in $X$. Its positive distance from $\partial B_R$ makes it relatively open in $K$, and compactness makes it relatively closed in $K$. It is nonempty because it contains $C$, a contradiction. This argument includes singleton components and does not require a component to be isolated. Finally every component of a closed subset of the plane is closed, so every bounded component would be compact.
\end{proof}

\begin{theorem}[Classification of generalized global minimizers]\label{core:classification}
Every reduced planar generalized global minimizer is an elementary
minimizer or a canonical crack tip. No homogeneity assumption is required.
\end{theorem}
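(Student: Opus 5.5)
The classification splits according to whether the complement $D=\R^2\setminus K$ of the crack is connected, and both branches end in the established results (i)--(iii) of Section~\ref{sec:reduction}.

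\textbf{Disconnected complement.} If $D$ is disconnected, result (i) (David--L\'eger, \cite[Theorem~4.4.1]{DLF}) shows directly that the minimizer is a pure jump or a triod. Both are elementary minimizers, so nothing more is needed in this case.

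\textbf{Connected complement.} Assume now that $D$ is connected. By the compactness discussion in Section~\ref{sec:reduction}, $K$ is closed, reduced and locally of finite length. Proposition~\ref{core:rigidity} then says that $K$ has no nonempty compact \emph{proper} subset that is relatively open and closed.

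The one gap is that $K$ itself might be compact, which the proposition, being restricted to proper subsets, does not exclude. I would treat this separately.
\begin{itemize}
\item If $K$ is empty, $u$ is harmonic on the plane. The growth bound \eqref{core:growth} and Liouville's theorem then make $\nabla u$ constant, and the bound forces that constant to be zero. This is the constant model.
\item If $K$ is nonempty and compact, I would rerun the final step of the proof of Proposition~\ref{core:rigidity} with $\chi\equiv1$ near $K$.
\begin{itemize}
\item The same loads, with the residuals computed relative to the single piece $S=K$, force $F_0=h$ near a regular subarc of $K$, and hence $F_0$ is a constant $A$.
\item Growth forces $A=0$, so $F=-h+\chi F$. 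Outside a large disk this says $F=O(|z|^{-1})$.
\item Equivalently and more directly: $F$ is holomorphic outside a compact set and $\nabla u\in L^2$ near infinity, so $F$ has a removable singularity at $\infty$ and $F(z)\to0$.
\item The inner variation \eqref{core:inner} with $X=\chi(x-a)$, where $DX=I$ on a neighborhood of $K$, then gives
\[
 \mathcal H^1(K)=-\int_D\bigl(|\nabla u|^2\operatorname{div}X-2\nabla u\cdot DX\nabla u\bigr).
\]
The integrand vanishes where $DX=I$ (because $2|p|^2-2|p|^2=0$), so the integral localizes to the transition annulus of $\chi$.
\item Letting the annulus go to infinity with a scale-invariant cutoff, $|\nabla\chi|\lesssim1/R$ on $B_{2R}\setminus B_R$, and using $|F|=O(|z|^{-1})$, the right-hand side is $O(R^{-2}\cdot R\cdot R)\cdot R^{-1}\to0$. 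This contradicts reducedness, which gives $\mathcal H^1(K)>0$.
\end{itemize}
\end{itemize}

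Once $K$ is noncompact, Lemma~\ref{core:topology} applies to any compact component. I would check the lemma's proof in this setting: it produces a nonempty compact clopen $W$ avoiding a large sphere, and $W$ is proper because $K\not\subset B_R$. So Proposition~\ref{core:rigidity} is contradicted, and every connected component of $K$ is unbounded. By result (ii), $K$ has at most one unbounded component. Hence $K$ is empty or connected. The hypothesis of (iii) then holds trivially, since all but at most one component, i.e.\ none, lie in a compact set. The Bonnet--David classification \cite[Theorem~1.4.6]{DLF} concludes that the minimizer is elementary (constant or line) or a canonical crack tip \eqref{intro:canonical}.

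\textbf{Main obstacle.} I expect the delicate step to be the compact-$K$ case, since Proposition~\ref{core:rigidity} is stated only for proper subsets. There I must verify two things: that the decay $F=O(|z|^{-1})$ really follows, from removability at infinity given square integrability, and that the boundary term in the inner variation vanishes in the limit. The rest of the argument is bookkeeping between the cited results.
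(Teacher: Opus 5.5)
Your treatment of the disconnected case and of noncompact $K$ with connected complement is the paper's proof: Proposition~\ref{core:rigidity}, then Lemma~\ref{core:topology}, then results (ii) and (iii). You diverge only in the compact case, and there the ``gap'' you identify is not one. If $K$ is compact, including $K=\varnothing$, all of its components lie in one common compact set. Result (iii), the first paragraph of \cite[Theorem~1.4.6]{DLF}, therefore applies directly, and its only model with compact crack is the constant minimizer. The paper settles this case in one line this way, without invoking Proposition~\ref{core:rigidity}.

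Your substitute argument for nonempty compact $K$ does not work as written. On the transition annulus, the bulk integrand of \eqref{core:inner} with $X=\chi(x-a)$ is $|p|^2(x-a)\cdot\nabla\chi-2(p\cdot(x-a))(p\cdot\nabla\chi)$. This is only bounded by a constant times $|p|^2$. With $|F|=O(|z|^{-1})$ the annulus integral is therefore $O(R^{-2}\cdot R^{2})=O(1)$, not $O(R^{-1})$; your count uses $R$ instead of $R^2$ for the area of the annulus.

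This loss is genuine. The expansion permitted by \eqref{core:growth} is $F=c/z+O(|z|^{-2})$ with $c$ real, that is, $u=c\log|x|$ plus decaying terms. For radial cutoffs the logarithmic part contributes a term tending to $2\pi c^2$, not to zero. For the same reason, $\nabla u\in L^2$ near infinity does not follow from \eqref{core:growth}, which tolerates logarithmic divergence of the energy.

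The missing ingredient is the zero-flux (Neumann) condition. Testing \eqref{core:weak} with a radial cutoff $\varphi\in\mathscr T_K$ equal to $1$ near $K$ gives $2\pi c=0$. Then $F=O(|z|^{-2})$, and the annulus term is bounded by $C\int_{|x|>R}|\nabla u|^2\to0$. This gives $\mathcal H^1(K)=0$, which contradicts reducedness.

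With this repair your route is a valid, self-contained Pohozaev-type argument that avoids Bonnet--David for compact cracks. The detour through $F_0$ and $h$ is unnecessary, since the Laurent expansion at infinity already gives the decay. Still, simply invoking (iii), as the paper does, is all that is needed.
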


\begin{proof}
Let $(u,K,\{p_{kl}\})$ be a generalized global minimizer in the sense of
\cite[Definition~2.2.4]{DLF}.
If $\R^2\setminus K$ is disconnected,
\cite[Theorem~4.4.1]{DLF} gives a pure jump or a triod.
If $K$ is compact, including $K=\varnothing$,
\cite[Theorem~1.4.6, first paragraph]{DLF} already applies, since all
components are contained in a common compact set. Its only model with
compact crack is the constant minimizer.

It remains to consider noncompact $K$ with connected complement.
Proposition~\ref{core:rigidity} excludes every nonempty compact relatively
open-and-closed subset of $K$, since any such subset would be proper.
Lemma~\ref{core:topology} therefore makes every component of $K$
unbounded. By \cite[Corollary~4.4.3]{DLF} there is at most one such
component, so $K$ is connected. The first paragraph of
\cite[Theorem~1.4.6]{DLF} applies again: the requirement that all but at
most one component lie in a common compact set is automatic for a
connected crack. This gives the asserted classification.
\end{proof}

\end{document}